\newcommand{\Aut}{\mathrm{Aut}}
\newcommand{\Out}{\mathrm{Out}}
\newcommand{\Soc}{\mathrm{Soc}}
\newcommand{\AGL}{\mathrm{AGL}}
\newcommand{\Sp}{\mathrm{Sp}}
\newcommand{\Fun}{\mathrm{Fun}}
\newcommand{\PSL}{\mathrm{PSL}}
\newcommand{\PSU}{\mathrm{PSU}}
\newcommand{\PGL}{\mathrm{PGL}}
\newcommand{\N}{\mathrm{N}}
\newcommand{\Sym}{\mathrm{Sym}}
\newcommand{\A}{\mathrm{A}}
\newcommand{\Z}{\mathrm{Z}}
\newcommand{\D}{\mathrm{D}}
\newcommand{\CF}{\mathrm{CF}}
\newcommand{\C}{\mathrm{C}}
\newcommand{\ICF}{\mathrm{InsolCF}}
\newcommand{\m}{\mathrm{Mult}}
\newtheorem{thm}{Theorem}[section]
\newtheorem{lem}[thm]{Lemma}
\theoremstyle{definition}
\newtheorem{que}[thm]{Question}
\newtheorem{rem}[thm]{Remark}
\newtheorem{hyp}[thm]{Hypothesis}
\begin{document}

\title[Vertex-primitive $s$-arc-transitive digraphs]{Bounding $s$ for vertex-primitive $s$-arc-transitive digraphs of alternating and symmetric groups}
\thanks{This work is supported by NNSFC (11861012, 12061092), the special foundation for Guangxi Ba Gui Scholars and Yunnan Applied Basic Research Projects (202101AT070137).}
\thanks{$^*$Corresponding author. Email address: lijjhx@gxu.edu.cn}

\author{Junyan Chen}
\address[Junyan Chen]{Department of Mathematics, Southern University of Science and Technology, Shenzhen, 518055, P.R.~China.}

\author{Lei Chen}
\address[Lei Chen, Michael Giudici, Cheryl E. Praeger]{Department of Mathematics and Statistics, The University of Western Australia, Perth WA 6009}

\author{Michael Giudici}

\author{Jing Jian Li$^*$}
\address[Jing Jian Li]{College of Mathematics and Information Science, Guangxi University, Nanning, 530004, P.R.~China.}

\author{Cheryl E. Praeger}

\author{Binzhou Xia}
\address[Binzhou Xia]{School of Mathematics and Statistics, University of Melbourne, Parkville, VIC 3010, Australia.}

\begin{abstract}
Determining an upper bound on $s$ for finite vertex-primitive $s$-arc-transitive digraphs has received considerable attention dating back to a question of Praeger in 1990. It was shown by Giudici and Xia that the smallest upper bound on $s$ is attained for some digraph admitting an almost simple \(s\)-arc-transitive group. In this paper, based on the work of Pan, Wu and Yin, we prove that $s\leqslant 2$ in the case where the group is an  alternating or symmetric group.

\textit{Key words:} digraph; vertex-primitive; $s$-arc-transitive; alternating group; symmetric group.
\end{abstract}

\maketitle

\section{Introduction}

A digraph $\Gamma$ is a pair $(V,\rightarrow)$ with $V$ a set and~$\rightarrow$  an antisymmetric irreflexive binary relationon $V$.
Let $s$ be a positive integer.
An \emph{$s$-arc} is a sequence $v_0,v_1,\cdots,v_s$ of vertices such that $v_i\rightarrow v_{i+1}$ for $0\leqslant i\leqslant s-1$.
Let $G$ be a subgroup of the automorphism group $\Aut(\Gamma)$ of~$\Gamma$.
The digraph $\Gamma$ is said to be \emph{$G$-vertex-primitive} if $G$ acts primitively on $V$, and \emph{$(G,s)$-arc-transitive} if $G$ acts transitively on the set of $s$-arcs.
Note that a $(G,s+1)$-arc-transitive digraph is necessarily $(G,s)$-arc-transitive.
We call a digraph $\Gamma$ \emph{vertex-primitive} (or \emph{$s$-arc-transitive}) if $\Gamma$ is $\Aut(\Gamma)$-vertex-primitive (or $(\Aut(\Gamma),s)$-arc-transitive).

In 1990, Praeger~\cite{Praeger1990} asked whether there exists a finite vertex-primitive $2$-arc-transitive digraph other than directed cycles.
This was answered in the affirmative in 2017 by Giudici, Li and Xia~\cite{GLX2017}, with a construction of an infinite family of such digraphs and these authors asked the following question.

\begin{que}\label{que1}
Is there an upper bound on $s$ for finite vertex-primitive $s$-arc-transitive digraphs that are not directed cycles?
\end{que}

A group $G$ is said to be \emph{almost simple} if~$T\leqslant G\leqslant \Aut(T)$ for some nonabelian simple group $T$.
By a result of Giudici and Xia~\cite[Corollary 1.6]{GX2018}, Question~\ref{que1} is reduced to the case where $\Aut(\Gamma)$ is almost simple. Following this, it is shown in~\cite{GLX2019} that $s\leqslant 2$ for $G$-vertex-primitive $(G,s)$-arc-transitive digraphs where $G$ is almost simple with socle a projective linear group.
More recently, Pan, Wu and Yin~\cite{PWY2020} studied the case where the socle of $G$ is an alternating group, proving the following result.

\begin{thm}[Pan-Wu-Yin]\label{PWY}
Let $\Gamma$ be a $G$-vertex-primitive $(G,s)$-arc-transitive digraph where $G$ is almost simple with socle $A_n$, and let $v$ be a vertex of $\Gamma$. Then one of the following holds:
\begin{enumerate}[{\rm (1)}]
\item \(s\leqslant2\);
\item $(A_m\wr S_k)\cap G\leqslant G_v\leqslant (S_m\wr S_k)\cap G$ with $m\geqslant 8$, $k>1$ and $n=mk$, $m^k$ or $(m!/2)^{k-1}$.
\end{enumerate}
\end{thm}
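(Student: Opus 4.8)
The plan is to exploit vertex-primitivity to pin down the point stabiliser and then convert $s$-arc-transitivity into a condition on conjugate intersections of that stabiliser. First, since $\Gamma$ is $G$-vertex-primitive, the point stabiliser $H:=G_v$ is a maximal subgroup of $G$; and because the automorphism group has socle $A_n$, the digraph $\Gamma$ is not a directed cycle, so its out-valency satisfies $d\geq 2$. Writing the arc set as the $G$-orbit of $(v,w)$ with $w=v^g$ for a suitable $g\in G$, the out-neighbourhood of $v$ is the $H$-orbit of $w$, of size $d=|H:H\cap H^g|$, and the arc-stabiliser is $H\cap H^g$. The key reduction is the standard criterion (in the framework of Giudici and the third author) that $\Gamma$ is $(G,s)$-arc-transitive precisely when there is $g$ with $v\to v^g$ such that each nested intersection $H\cap H^g\cap\cdots\cap H^{g^{i}}$ acts transitively on the out-neighbours of $v^{g^{i}}$ for $0\le i\le s-1$. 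In particular, excluding $s\geq 3$ amounts to showing that $H\cap H^g\cap H^{g^2}$ cannot act transitively on a set of $d\ge 2$ out-neighbours.

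Next I would invoke the O'Nan--Scott theorem, in the refined form of Liebeck--Praeger--Saxl, to classify the maximal subgroups of an almost simple group with socle $A_n$. This sorts $H$ into: subset stabilisers (intransitive); partition stabilisers $(S_m\wr S_k)\cap G$ with $n=mk$ (imprimitive); affine type; diagonal type with socle $A_m^k$, of degree $n=(m!/2)^{k-1}$; product-action type $(S_m\wr S_k)\cap G$ with $n=m^k$; and almost simple primitive type. The three families appearing in conclusion (2) are exactly the imprimitive, diagonal and product-action classes, for which $H$ has the common wreath shape sandwiched between $(A_m\wr S_k)\cap G$ and $(S_m\wr S_k)\cap G$; the remaining classes, together with the degenerate parameters $k=1$ or small $m$, are what I must force into conclusion (1).

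For each class not listed in (2) I would run the conjugate-intersection analysis. For subset stabilisers and the affine case, the structure of $H$ together with a suitable choice of $g$ makes the triple intersection $H\cap H^g\cap H^{g^2}$ too small (or of the wrong isomorphism type) to be transitive on the out-neighbourhood, which yields $s\le 2$. For the almost simple primitive class, the known upper bounds on orders of primitive groups place $|H|$ far below $|A_n|$, and combined with the constraint on the suborbit carrying the arc this again rules out the iterated transitivity needed for $s\ge 3$. Finally, for the three wreath classes I would isolate the degenerate parameters: when $k=1$ the group is essentially $A_m$ or $S_m$ in a natural action and reduces to an earlier class, while for $m\le 7$ a finite analysis disposes of the sporadic configurations; what survives is precisely $m\ge 8$ with $k>1$, namely conclusion (2).

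The main obstacle, I expect, is the structural rather than enumerative nature of the bound. A naive count of $s$-arcs gives only $d^{s}\le|H|$, hence $s\lesssim\log_2|H|$, which is nowhere near $s\le 2$; so one genuinely needs arithmetic and structural control of $H\cap H^g\cap H^{g^2}$ in every class. The hardest sub-cases will be the almost simple primitive maximal subgroups, where there is no uniform description of the double cosets $HgH$ governing the arc, and the boundary wreath cases with small $m$, where the wreath structure still leaves enough room that one must explicitly decide---negatively for the degenerate parameters, and affirmatively for $m\ge 8$---whether a transitive triple intersection can occur.
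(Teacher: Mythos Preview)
The paper does not contain a proof of this statement. Theorem~\ref{PWY} is quoted verbatim from Pan, Wu and Yin~\cite{PWY2020} and is used as a black box; the paper's own contribution begins only afterwards, where it shows that Case~(2) of Theorem~\ref{PWY} also forces $s\leqslant 2$, yielding Theorem~\ref{thmm}. Consequently there is no ``paper's own proof'' to compare your proposal against.

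As to the substance of your sketch: the overall strategy---classify the maximal subgroup $G_v$ via the Liebeck--Praeger--Saxl list and then, for each family not of wreath shape, obstruct the transitivity of $G_{uv}\cap G_{vw}$ (equivalently $H\cap H^g\cap H^{g^2}$) needed for $s\geqslant 3$---is indeed the architecture of~\cite{PWY2020}. However, your proposal is a programme rather than a proof: in each of the intransitive, affine and almost simple primitive classes you assert that the triple intersection is ``too small'' or ``of the wrong isomorphism type'' without producing the mechanism. The actual arguments in~\cite{PWY2020} rely on case-by-case factorisation results (expressing $G_v=G_{uv}G_{vw}$ via Lemma~\ref{hab2} and then invoking classifications of factorisations of the relevant almost simple or wreath-type groups), not on a direct size estimate of $H\cap H^g\cap H^{g^2}$. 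Your own final paragraph correctly diagnoses why the naive counting bound is useless, but you do not supply the replacement; in particular, for the almost simple primitive class and for the boundary cases $m\leqslant 7$ the work is substantial and cannot be waved through as ``a finite analysis''. So the plan is broadly the right one, but none of the genuine content has been carried out.
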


In this paper, we improve the above result of Pan, Wu and Yin by showing that for Case~(2) of Theorem~\ref{PWY} we still have $s\leqslant 2$. This establishes the bound $s\leqslant 2$ for all such $G$-vertex-primitive $(G,s)$-arc-transitive digraphs. Moreover, we give necessary conditions for $s=2$ to be attained. Note that, a recent result \cite{smallest} shows that the smallest \(G\)-vertex-primitive \((G,2)\)-arc-transitive digraph has 30758154560 vertices. Since \(30758154560>13!=|S_{13}|\), we have \(n\geqslant 14\) for any $G$-vertex-primitive $(G,s)$-arc-transitive digraph with $s\geqslant2$ such that $G$ is almost simple with socle $A_n$. In particular, such a group $G$ is \(A_{n}\) or \(S_{n}\). Our main result is as follows.

\begin{thm}\label{thmm}
Let $\Gamma$ be a $G$-vertex-primitive $(G,s)$-arc-transitive digraph such that $s\geqslant2$ and $G$ is almost simple with socle $A_n$, and let \(v\) be a vertex of $\Gamma$. Then \(s=2\), $G_v$ is primitive in its natural action on $n$ points, and one of the following holds:
\begin{enumerate}[{\rm (a)}]
     \item $T\trianglelefteq G_v\leqslant \Aut(T)$ with $T=\Sp_4(2^f)$ or $\mathrm{P\Omega}_8^+(q)$, where $f\geqslant2$ and $q$ is a prime power;
     \item $G_{v}=(T^k.(\Out(T)\times S_k))\cap G$ with $n=|T|^{k-1}$ for some nonabelian simple group $T$ and integer $k\geqslant 2$ such that $k=2$ if $T$ is an alternating group.
\end{enumerate}
\end{thm}

\begin{rem}
For almost simple groups $G$ with socle $A_n$, we do not know of any examples of a $G$-vertex-primitive $(G,2)$-arc-transitive digraph.
However, the existence of $G$-arc-transitive digraphs with $s=1$ can be seen in the following way. Let $H$ be a core-free maximal subgroup of $G$, and let $\chi$ be the permutation character of the action of $G$ on $[G:H]$ by right multiplication, where $[G:H]$ denotes the set of right cosets of $H$ in $G$. Every orbital digraph of $G$ on $[G:H]$ is self-paired if and only if $\chi$ is multiplicity-free and real (see~\cite[Page~45]{Cameron1999}). Thus, if $\chi$ is not multiplicity-free, then there exists a non-self-paired orbital digraph of $G$ on $[G:H]$, which is then a $G$-vertex-primitive $(G,1)$-arc-transitive digraph. For example, a result of Saxl~\cite{saxl} (see also~\cite[Theorem~1.1]{GM2010}) shows that, whenever $G=S_n$ with $n>18$ and $H$ is primitive in its natural action on $n$ points, then $\chi$ is not multiplicity-free.
\end{rem}

Theorem~\ref{thmm} will be proved in Section~\ref{3}, following the groundwork laid in Sections~\ref{2}--\ref{4}. Notably, Section~\ref{5} is dedicated to addressing a gap in the argument of~\cite{PWY2020} to prove Theorem~\ref{PWY} (see Remark~\ref{rem}).

\section{Notation and Preliminaries}\label{2}

For a group \(G\), denote the set of prime divisors of $|G|$ by $\Pi(G)$, the set of composition factors of $G$ by \(\CF(G)\), and the set of insoluble composition factors of $G$ by \(\ICF(G)\). For a simple group \(T\), let \(\m_{G}(T)\) denote the multiplicity of \(T\) as a composition factor of \(G\).
For a positive integer $n$ and prime number $p$, let $n_p$ denote the $p$-part of $n$ (the largest $p$-power dividing $n$).
The following elementary result is a consequence of Legendre’s formula.

\begin{lem}\label{p}
For any positive integer $n$ and prime $p$ we have $(n!)_p<p^{\frac{n}{p-1}}$.
\end{lem}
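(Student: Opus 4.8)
The plan is to invoke Legendre's formula for the $p$-adic valuation $v_p(n!)$ of $n!$, namely $v_p(n!)=\sum_{i=1}^{\infty}\lfloor n/p^i\rfloor$, so that $(n!)_p=p^{v_p(n!)}$ and it suffices to prove $v_p(n!)<n/(p-1)$. First I would estimate each summand by $\lfloor n/p^i\rfloor\leqslant n/p^i$, and then evaluate the geometric series $\sum_{i=1}^{\infty} n/p^i=(n/p)/(1-1/p)=n/(p-1)$ to conclude $v_p(n!)\leqslant n/(p-1)$.

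To obtain the strict inequality claimed — the only point where a little care is needed — I would note that whenever $p^i>n$ (which holds for all sufficiently large $i$, and already for $i=1$ when $p>n$) we have $\lfloor n/p^i\rfloor=0$ while $n/p^i>0$, since $n\geqslant 1$; hence at least one term of the sum is strictly smaller than its upper bound, forcing $v_p(n!)<n/(p-1)$. Because $p>1$, the map $x\mapsto p^x$ is strictly increasing, so this yields $(n!)_p=p^{v_p(n!)}<p^{n/(p-1)}$, as required. I do not anticipate any genuine obstacle here: the statement is a direct consequence of Legendre's formula together with the elementary observation that the geometric bound on the valuation is never attained exactly.
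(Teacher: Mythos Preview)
Your proof is correct and follows precisely the approach the paper indicates: the paper does not spell out a proof but merely states that the lemma is ``a consequence of Legendre's formula,'' and your argument---bounding $v_p(n!)=\sum_{i\geqslant1}\lfloor n/p^i\rfloor$ by the geometric series $\sum_{i\geqslant1}n/p^i=n/(p-1)$ and observing that the bound is strict since some tail term satisfies $\lfloor n/p^i\rfloor=0<n/p^i$---is exactly the intended derivation.
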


The next result is the well-known Bertrand's Postulate, see~\cite{hardy}, for example.

\begin{lem}\label{lem:bert}
For every real number \(x\geqslant 7\), there exists a prime number \(p\) satisfying \(x/2<p\leqslant x-2\).
\end{lem}

It is well known that the number of involutions in a nonabelian simple group is odd, see for instance~\cite[Theorem]{marcel2}. This together with~\cite[Theorem 1]{marcel} gives the result below.

\begin{lem}\label{marcel}
\emph{(Herzog)} Let \(T\) be a finite nonabelian simple group with \(I\) involutions. Then either \(I\equiv 3\pmod{4}\), or $T$ is one of the following groups and $I\equiv1\pmod{4}$.
\begin{enumerate}[{\rm(a)}]
    \item \(T=\PSL_{2}(q)\) with prime power \(q\equiv\varepsilon\pmod{8}\) for some \(\varepsilon=\pm1\), and \(I=q(q+\varepsilon)/2\);
    \item \(T=\PSL_{3}(q)\) with prime power \(q\equiv-1\pmod{4}\), and \(I=q^{2}(q^{2}+q+1)\);
    \item \(T=\PSU_{3}(q)\) with prime power \(q\equiv1\pmod{4}\), and \(I=q^{2}(q^{2}-q+1)\);
    \item \(T=A_{7}\) and \(I=105\);
    \item \(T=M_{11}\) and \(I=165\).
\end{enumerate}
\end{lem}

Let us recall the O'Nan-Scott Theorem (see, for example,~\cite[Theorem 7.11]{csaba}) on maximal subgroups of alternating and symmetric groups.

\begin{thm}[O'Nan-Scott]\label{onanscottthm}
Let $G=A_n$ or $S_n$, and let $H$ be a maximal subgroup of $G$ with $H\neq A_n$. Then one of the following holds:
\begin{enumerate}[{\rm(a)}]
\item $H=(S_k\times S_{n-k})\cap G$ with $1\leqslant k<n/2$;
\item $H=(S_m\wr S_k)\cap G$ with $n=mk$ for some integers $m\geqslant2$ and $k\geqslant2$;
\item $H=(S_m\wr S_k)\cap G$ with $n=m^k$ for some integers $m\geqslant 5$ and $k\geqslant2$;
\item $H=\AGL(k,p)\cap G$ with $n=p^k$ for some prime $p$ and positive integer $k$;
\item $H=(T^k.(\Out(T)\times S_k))\cap G$ with $n=|T|^{k-1}$ for some nonabelian simple group $T$ and integer $k\geqslant 2$;
\item $T\trianglelefteq H\leqslant \Aut(T)$ for some nonabelian simple group $T$.
\end{enumerate}
\end{thm}

An expression $G=AB$ of a group $G$ as the product of two subgroups $A$ and $B$ is called a \emph{factorization} of $G$.
An observation on group factorizations is as follows.

\begin{lem}\label{hab1}
If $G=AB$ is a factorization, then $G=A^xB^y$ is a factorization for all $x,y\in G$.
\end{lem}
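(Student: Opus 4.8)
The plan is to reduce the whole statement to the single elementary fact that $HzK=G$ for every $z\in G$, and then recover the conjugated factorization by absorbing the conjugating elements into a two-sided coset. Throughout I use the convention $H^x=x^{-1}Hx$ (and likewise for $K$), which matches the notation of the preceding lemmas.

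First I would expand the product using $H^x=x^{-1}Hx$ and $K^y=y^{-1}Ky$, and set $z=xy^{-1}$. The inner factors then collapse, giving
\[
H^xK^y=(x^{-1}Hx)(y^{-1}Ky)=x^{-1}\,H(xy^{-1})K\,y=x^{-1}(HzK)\,y.
\]
Hence it suffices to prove $HzK=G$ for all $z\in G$: granting this, the right-hand side becomes $x^{-1}Gy=G$, and since $x,y\in G$ were arbitrary the claim follows for every $x,y$.

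For the core step I would invoke the hypothesis $G=HK$ directly on the element $z$ itself: write $z=h_0k_0$ with $h_0\in H$ and $k_0\in K$. Then, using $Hh_0=H$ (because $h_0\in H$) and $k_0K=K$ (because $k_0\in K$), we get $HzK=Hh_0k_0K=(Hh_0)(k_0K)=HK=G$. Equivalently, one could phrase this via double cosets: $G=HK=HeK$ shows that the double coset of the identity already exhausts $G$, and since the double cosets $\{HgK:g\in G\}$ partition $G$ there is only one of them, so $HgK=G$ for every $g$.

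I do not expect a substantive obstacle here; this is a routine structural lemma. The only point requiring care is bookkeeping with the conjugation convention—keeping the left/right placement of $x^{-1},x,y^{-1},y$ consistent so that the middle factors genuinely reduce to $z=xy^{-1}$—and noting that a naive cardinality argument is \emph{not} the cleanest route, since $|H^x\cap K^y|$ need not visibly equal $|H\cap K|$; the coset-absorption identity above sidesteps this entirely.
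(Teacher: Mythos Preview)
Your argument is correct: the identity $H^xK^y=x^{-1}(H(xy^{-1})K)y$ together with the observation that $G=HK$ forces a single $(H,K)$-double coset (so $HzK=G$ for all $z\in G$) is exactly the clean way to do this, and the bookkeeping with the conjugation convention is handled properly.

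As for comparison with the paper: there is nothing to compare. The paper does not prove this lemma; it merely quotes it from \cite[Lemma~2.2]{GX2018}. Your self-contained proof is therefore strictly more than what the paper provides, and it is the standard elementary argument one would expect in that reference as well.
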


The factorizations of groups between an alternating group and its automorphism group are classified by Liebeck, Praeger and Saxl~\cite[Theorem D]{LPS} (partial results can be found in the earlier work of Wiegold and Williamson~\cite{WW}), as stated below. A permutation group on a set \(\Omega\) is said to be \emph{\(k\)-homogeneous} if it is transitive on the set of \(k\)-subsets of \(\Omega\).

\begin{thm}[Liebeck-Praeger-Saxl]\label{LPS}
Let \(L=\A_{n}\) with \(n\geqslant5\), and let \(L\trianglelefteq G\leqslant\Aut(L)\). Suppose that \(G=AB\) with subgroups \(A\) and \(B\) of \(G\) not containing \(L\). Then, interchanging $A$ and $B$ if necessary, one of the following holds:
 \begin{enumerate}[{\rm (a)}]
 \item \(A_{n-k}\trianglelefteq A\leqslant S_{n-k}\times S_{k}\) for some \(k\) with \(1\leqslant k\leqslant5\), and \(B\) is \(k\)-homogeneous;
 \item \(n=10\), \(A=\PSL_{2}(8)\) or \(\PSL_{2}(8).3\), and \(A_{5}\times A_{5}\trianglelefteq B\leqslant S_{5}\wr S_{2}\) with \(B\) transitive on \(\Omega\);
 \item \(n=8\), \(\Z_{5}\times\Z_{3}\leqslant A\) and \(B=\AGL_{3}(2)\);
 \item \(n=6\), and one of the following holds:
     \begin{itemize}
         \item  \(A\cap L=\PSL_{2}(5)\), \(B\cap L\leqslant S_{3}\wr S_{2}\) with \(A\cap S_{6}\) and \(B\cap S_{6}\) both transitive on \(\Omega\);
         \item \(A\cap L=\Z_{5}\) or \(\D_{10}\) and \(B\cap L\leqslant S_{3}\wr S_{2}\);
         \item  \(G\) is not contained in \(S_{6}\), while \(A\cap S_{6}\) and \(B\cap S_{6}\) are as in \emph{(a)}.
     \end{itemize}
 \end{enumerate}
 \end{thm}

An immediate corollary of Theorem~\ref{LPS} is as follows.

\begin{lem}\label{ab}\cite[Lemma 2.3]{GLX2019}
Let \(G=S_{n}\) with \(n\geqslant2\) or $G=A_n$ with \(n\geqslant3\). Suppose \(G=AB\) with subgroups \(A\) and \(B\) of \(G\). Then at least one of \(A\) or \(B\) is transitive on \(\Omega\).
\end{lem}

\begin{rem}\label{rem}
In \cite{GLX2019} the statement of the above lemma erroneously included $G=A_2$, and this erroneous statement was unfortunately applied in~\cite{PWY2020} to prove Theorem~\ref{PWY}, when dealing with vertex stabilisers as maximal subgroups $H$ of $A_n$ in Cases~(b),~(c) and~(e) of Theorem~\ref{onanscottthm}. A gap in the reasoning would occur in these cases if $k=2$ and the projection of $H$ to $S_k$ is $A_k$ instead of $S_k$. In the next section we will show that the projection is $S_k$ in most cases, and the remaining cases will be handled within the first paragraph in the proof of Theorem~\ref{thmm} in Section~\ref{3}, thus fixing the gap in the proof of \cite[Theorem 1.1]{PWY2020}.
\end{rem}

We will need the following technical lemma in our analysis.

\begin{lem}\label{conju}
Let $G$ be an almost simple group with socle $A_m$ for some $m\geqslant 8$, let $q$ be the largest prime less than $m$, and let $\ell$ be an integer such that $q\leqslant\ell<m$. Then all subgroups of $G$ isomorphic to $A_\ell$ are conjugate in $G$.
\end{lem}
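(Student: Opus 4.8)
The plan is to realise every subgroup of $G$ isomorphic to $A_\ell$ concretely inside the natural permutation representation. Since $\Aut(A_m)=S_m$ for $m\geqslant 8$ and $|S_m:A_m|=2$, we have $G=A_m$ or $G=S_m$, and we may let $G$ act naturally on $\Omega=\{1,\dots,m\}$. By Bertrand's postulate the largest prime $p$ below $m$ satisfies $p>m/2$, so $\ell\geqslant p>m/2$, that is, $m<2\ell$; note also that $p\geqslant 7$, hence $\ell\geqslant 7$.

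Now let $H\leqslant G$ with $H\cong A_\ell$ and examine the $H$-orbits on $\Omega$. As $A_\ell$ is simple, each orbit affords either the trivial representation or a faithful transitive permutation representation of $A_\ell$, so every orbit has size $1$ or size $d$ with $\ell\leqslant d\leqslant m<2\ell$, using that $\ell$ is the minimal faithful permutation degree of $A_\ell$. The key claim is that for $\ell\geqslant 7$ there is no faithful transitive permutation representation of $A_\ell$ of degree $d$ with $\ell<d\leqslant m$: a subgroup of index $d$ lies in a maximal subgroup of index at most $d<2\ell$, the intransitive maximal subgroups other than the point stabilisers have index at least $\binom{\ell}{2}\geqslant 2\ell$, and the transitive maximal subgroups have index exceeding $2\ell$ for $\ell\geqslant 7$ apart from the single borderline case $(\ell,d)=(8,15)$ coming from $A_8\cong\mathrm{PSL}_4(2)$ --- which does not occur here because $\ell=8$ forces $p=7$ and hence $m\leqslant 11<15$. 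Consequently every $H$-orbit has size $1$ or $\ell$, and since $m<2\ell$ there is exactly one orbit $\Delta$ of size $\ell$, the remaining $m-\ell$ points being fixed by $H$. The group $H$ induces on $\Delta$ a transitive subgroup of $\mathrm{Sym}(\Delta)$ isomorphic to $A_\ell$, so by the uniqueness (valid for $\ell\geqslant 7$) of the faithful transitive representation of $A_\ell$ of minimal degree it is conjugate in $\mathrm{Sym}(\Delta)$ to $\mathrm{Alt}(\Delta)$; as $\mathrm{Alt}(\Delta)\trianglelefteq\mathrm{Sym}(\Delta)$ we conclude $H=\mathrm{Alt}(\Delta)$ inside $\mathrm{Sym}(\Omega)$.

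It remains to show that any two such subgroups are conjugate in $G$. If $H_1=\mathrm{Alt}(\Delta_1)$ and $H_2=\mathrm{Alt}(\Delta_2)$ with $|\Delta_1|=|\Delta_2|=\ell$, pick $g\in S_m$ with $\Delta_1^g=\Delta_2$, so that $H_1^g=\mathrm{Alt}(\Delta_2)=H_2$. This settles the case $G=S_m$; if $G=A_m$ and $g$ is odd, replace $g$ by $tg$ where $t$ is a transposition in $\mathrm{Sym}(\Delta_1)\leqslant N_{S_m}(H_1)$ (which exists since $\ell\geqslant 2$), giving $tg\in A_m$ and $H_1^{tg}=H_1^g=H_2$.

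The main obstacle is the claim in the second paragraph that $A_\ell$ has no transitive permutation representation of degree strictly between $\ell$ and $m$. Establishing this rests on the description of the low-index maximal subgroups of $A_\ell$ --- intransitive ones of index $\binom{\ell}{k}$, imprimitive ones, and primitive ones of bounded order --- together with the elementary inequality $\binom{\ell}{2}\geqslant 2\ell$ for $\ell\geqslant 5$, and on checking by hand (for the finitely many degrees not covered by a crude order bound, using known lists of primitive groups) that the only near-exception, the degree-$15$ representation of $A_8$, is ruled out because $p\leqslant\ell<m$ forces $m\leqslant 11$.
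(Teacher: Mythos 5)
Your proof is correct, but the decisive step is handled by a genuinely different tool than the one the paper uses. Both arguments share the same skeleton: show that any $H\cong A_\ell$ in $G\leqslant S_m$ must be $\mathrm{Alt}(\Delta)$ for some $\ell$-subset $\Delta$ (using Bertrand's postulate to get $\ell>m/2$, hence a unique nontrivial orbit), and then conjugate one $\ell$-subset to another, adjusting by an element of $N_{S_m}(H_1)$ if a parity problem arises in $A_m$. The difference lies in how an orbit of size $d>\ell$ is excluded. You rule it out by bounding from below the index of every maximal subgroup of $A_\ell$ other than a point stabiliser --- $\binom{\ell}{2}\geqslant 2\ell$ for the intransitive ones, order estimates and known lists of primitive groups for the transitive ones --- and you correctly isolate and kill the one near-exception $\mathrm{AGL}_3(2)<A_8$ of index $15<2\ell$ via the constraint $p\leqslant\ell<m$, which forces $m\leqslant 11$. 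The paper instead observes that a transitive $H\cong A_\ell$ on $d>\ell$ points yields a factorization $\mathrm{Sym}(\Delta)=H\cdot S_{d-1}$ in which both factors have full prime spectrum, and invokes the Baddeley--Praeger classification of full factorizations to see this is impossible for $d>7$; this is slicker in that it needs no case analysis and no exceptional degree to track, at the cost of citing a heavier classification theorem. Your route is more elementary in spirit (it amounts to the standard low-index subgroup theorem for $A_\ell$, e.g.\ Dixon--Mortimer Theorem 5.2B, with a Praeger--Saxl-type order bound covering $\ell\geqslant 13$ and a finite check below that), but you should make the citation for the transitive case explicit rather than gesturing at it. Two minor simplifications: once $H$ acts faithfully and transitively on $\Delta$ with $|H|=\ell!/2$, it has index $2$ in $\mathrm{Sym}(\Delta)$ and hence equals $\mathrm{Alt}(\Delta)$ outright, with no appeal to uniqueness of the minimal-degree representation; and the final conjugacy could equally be phrased, as in the paper, via transitivity of $G$ on $(m-\ell)$-subsets together with $H=\bigl(G_{(\Omega\setminus\Delta)}\bigr)'$.
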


\begin{proof}
Regard $G$ as a permutation group on a set $\Omega$ with size $m$. Let $H$ be a subgroup of $G$ isomorphic to $A_\ell$.

Suppose that $H$ has an orbit $\Delta$ with $|\Delta|=d>\ell$. Let $X=\mathrm{Sym}(\Delta)$.
Then $H$ is a transitive subgroup of $X$, and so we have a factorisation $X=HY$, where $Y=S_{d-1}$ is a point-stabiliser of $X$ on $\Delta$.
Since $q\leqslant\ell<d$ and $q$ is the largest prime less than $d$, it holds that $\Pi(H)=\Pi(A_\ell)=\Pi(S_d)=\Pi(X)$.
Moreover, as $d-1\geqslant q$, we have $\Pi(Y)=\Pi(S_{d-1})=\Pi(S_d)=\Pi(X)$.
Note that $d>q\geqslant 7$ as $m\geqslant 8$. We conclude from~\cite[Theorem 1.1]{BP1998} that $X=S_d$ cannot have a factorisation $X=HY$ with $\Pi(H)=\Pi(Y)=\Pi(X)$.

Thus every orbit of $H$ has size at most $\ell$. This together with $H\cong A_\ell$ implies that the orbit sizes of $H$ are $\ell$ or $1$.
Since $q$ is the largest prime less than $m$, we have $q>m/2$ by Lemma~\ref{lem:bert}.
As a consequence, $\ell>m/2$, and so $H$ has exactly one orbit of size $\ell$ and $m-\ell$ orbits of size $1$.
In other words, $H$ has an orbit $\Delta$ of size $\ell$ and pointwise fixes $\Omega\setminus\Delta$. Thus $H\leqslant G_{(\Omega\setminus\Delta)}$, the pointwise stabiliser of $\Omega\setminus\Delta$, and \(G_{(\Omega\setminus\Delta)}\cong A_{\ell}\), so \(H=G_{(\Omega\setminus\Delta)}\). The conclusion follows since \(G\) is transitive on the set of \(\ell\)-subsets of \(\Omega\).
\end{proof}

\section{On maximal subgroups of alternating groups}\label{5}

For an imprimitive maximal subgroup \(S_m\wr S_k\) of \(S_{n}\), where $n=mk$ with \(m,k\geqslant2\). We view \(S_m\wr S_k\) as the permutation group \(\Sym(\Delta)\wr \Sym(\Gamma)\) on the set $\Delta\times\Gamma$, where $\Delta=\{1,\dots,m\}$ and $\Gamma=\{1,\dots,k\}$. More precisely, for \(f\in \Fun(\Gamma, \Sym(\Delta))\) (the set of functions from $\Gamma$ to $\Sym(\Delta)$), \(u\in\Sym(\Gamma)\) and \((\delta,\gamma)\in\Delta\times \Gamma\), define
\[
(\delta,\gamma)^{(f,u)}=(\delta^{f(\gamma)},\gamma^{u}).
\]

\begin{lem}\label{imprimitive}
Suppose that \(H=(S_{m}\wr S_{k})\cap A_n\) is an imprimitive maximal subgroup of \(A_{n}\), where \(n=mk\) and \(m,k\geqslant2\). Let \(M=S_{m}^{k}\) and \(\pi\) be the projection from \(H\) to \(HM/M\). Then \(\pi(H)=S_{k}\).
\end{lem}

\begin{proof}
Let $W=\Fun(\Gamma,\Sym(\Delta))\rtimes\Sym(\Gamma)=S_{m}\wr S_{k}$, in the above defined action on $\Delta\times\Gamma$. Since $\pi(W)=S_k$ and $|W:H|\leqslant 2$, we have $\pi(H)\geqslant A_k$. Thus it suffices to find a pair \((f,u)\in H\) such that \(u=(1,2)\in S_{k}\).

First assume that \(m\) is even. Take \((f,u)=((1,1,\ldots,1),(1,2))\in W\). Then \((f,u)\) swaps the~\(m\) pairs \((\delta,1)\) and \((\delta,2)\) for \(\delta\in\Delta\) and fixes all other pairs in \(\Delta\times\Gamma\). Thus \((f,u)\) is an even permutation of \(\Delta\times \Gamma\) and so \((f,u)\in H\).

Next assume that \(m\) is odd. Take \((f,u)=(((1,2),1,\ldots,1),(1,2))\). Then \((f,u)\) fixes the pairs \((\delta,\gamma)\) for \(\gamma\in\Gamma\setminus\{1,2\}\), swaps the pairs \((\delta,1)\) and \((\delta,2)\) for \(\delta\in\Delta\setminus\{1,2\}\), and  the orbit of \((1,1)\) under the action of \(\langle(f,u)\rangle\) is \(\{(1,1),(2,2),(2,1),(1,2)\}\) is of size \(4\). Again \((f,u)\) is an even permutation of \(\Delta\times\Gamma\), so \((f,u)\in H\).
\end{proof}

As shown in the next lemma, the conclusion of Lemma~\ref{imprimitive} holds for the primitive wreath product $S_{m}\wr S_{k}$.
Such a maximal subgroup \(S_{m}\wr S_{k}\) of \(S_{n}\), where $n=m^k$, can be viewed as the permutation group $\Sym(\Delta)\wr \Sym(\Gamma)$ on $\Fun(\Gamma, \Delta)$, where $\Delta=\{1,\dots,m\}$ and $\Gamma=\{1,\dots,k\}$. More precisely, for \(f\in \Fun(\Gamma, \Delta)\) and \(x=((x_{1},\ldots, x_{k}),\tau^{-1})\in \Sym(\Delta)\wr \Sym(\Gamma)\), where $(x_{1},\ldots, x_{k})\in\Fun(\Gamma,\Sym(\Delta))$ and $\tau\in\Sym(\Gamma)$, define
\[
f^{x}=(f(\tau(1))^{x_{\tau(1)}},\ldots, f(\tau(k))^{x_{\tau(k)}}).
\]

\begin{lem}\label{PA}
Suppose that \(H=(S_{m}\wr S_{k})\cap A_n\) is a maximal subgroup of product action type in \(A_{n}\), where \(n=m^{k}\) and \(m,k\geqslant2\). Let \(M=S_{m}^{k}\) and \(\pi\) be the projection from \(H\) to \(HM/M\). Then \(\pi(H)=S_{k}\).
\end{lem}

\begin{proof}
Let $W=\Fun(\Gamma,\Sym(\Delta))\rtimes\Sym(\Gamma)=S_{m}\wr S_{k}$, in the above defined action on $\Fun(\Gamma, \Delta)$.

If $k=2$ and $\pi(H)\neq S_2$ then $H$ preserves two different partitions of \(\Fun(\Gamma,\Delta)=\Delta^{2}\) into $m$ parts of size $m$, namely, \(\Pi_{1}=\{\Delta\times\{i\} \mid 1\leqslant i\leqslant m\}\) and \(\Pi_{2}=\{\{j\}\times \Delta \mid 1\leqslant j\leqslant m\}\), and so \(H\) is a proper subgroup of $(S_m\wr S_2)\cap\A_n$ by Lemma~\ref{imprimitive}, contradicting the maximality of $H$ in $A_n$. Thus from now on we assume that $k\geqslant 3$. Since $\pi(W)=S_k$ and $|W:H|\leqslant 2$, we have $\pi(H)\geqslant A_k$. Thus it suffices to show that $(1,2)\in\pi(H)$.

First assume that  \(m\equiv 0,1\) or \(2\pmod{4}\). Let us take \(x=(1,\ldots,1)(1,2)\). Then \(x\) swaps the pairs \(\{f,g\}\) such that \(f(1)=g(2)\neq f(2)=g(1)\) and \(f(i)=g(i)\) for \(i\in\{3,\ldots,k\}\) and fixes all other elements of \(\Fun(\Gamma,\Delta)\). There are \(m^{k-1}(m-1)/2\) such pairs, and since \(m^{k-1}(m-1)/2\) is even, \(x\) is an even permutation, and so \(x\in H\).

Next assume that \(m\equiv3\pmod{4}\). Take \(x=(((1,2),1,\ldots,1),(1,2))\). Then
\[
f^{x}=(f(2),f(1)^{(1,2)},f(3),\ldots, f(k)).
\]
We observe the following three sets of orbits of $\langle x\rangle$, denoted by \(\mathcal{O}_{1}\), \(\mathcal{O}_{2}\) and \(\mathcal{O}_{3}\).

The first set \(\mathcal{O}_{1}\) denotes the set of orbits of the form \(f^{\langle x\rangle}\) with \(f(1)=f(2)\geqslant3\). Here
\[
f^{x}=(f(2),f(1),\ldots, f(k))=(f(1), f(2),\ldots, f(k))=f.
\]
Hence each orbit in $\mathcal{O}_{1}$ is a singleton.
Since \(f(1), f(2)\in\{3,\ldots,m\}\) and \(f(i)\in\{1,\ldots,m\}\), it follows that \(|\mathcal{O}_1|=(m-2)m^{k-2}\).

The second set \(\mathcal{O}_{2}\) denotes the set of orbits of the form \(f^{\langle x\rangle}\) with \(f(1)\geqslant3\), \(f(2)\geqslant3\) and \(f(1)\neq f(2)\). For such an $f$, we have
\begin{align*}
     f^{x}&=(f(2),f(1),f(3),\ldots,f(k))\neq f, \\
     f^{x^{2}}&=(f(1),f(2),f(3),\ldots,f(k))=f.
\end{align*}
Hence each orbit in $\mathcal{O}_{2}$ has size $2$, and is uniquely determined by $\{f(1),f(2)\}$ and $(f(3),\dots,f(k))$.
Since \(f(1), f(2)\in\{3,\ldots,m\}\) with \(f(1)\neq f(2)\) and \(f(3),\dots,f(k)\in\{1,\ldots,m\}\), we deduce that $|\mathcal{O}_2|=(m-2)(m-3)m^{k-2}/2$.

The third set \(\mathcal{O}_{3}\) denotes the set of orbits of the form \(f^{\langle x\rangle}\) with \(f(1)=1\) and \(f(2)\geqslant2\). Here
\begin{align*}
         f^{x}&=(f(2), 2, f(3),\ldots,f(k)),\\
         f^{x^{2}}&=(2,f(2)^{(1,2)},f(3),\ldots,f(k))\neq f,\\
         f^{x^{3}}&=(f(2)^{(1,2)},1,f(3),\ldots,f(k)),\\
        f^{x^{4}}&=(1,f(2),f(3),\ldots,f(k))=f.
\end{align*}
Hence each orbit in $\mathcal{O}_{3}$ has size $4$, and is uniquely determined by $(f(2),f(3),\dots,f(k))$.
Since $f(2)\in\{2,\dots,m\}$ and \(f(3),\dots,f(k)\in\{1,\ldots,m\}\), it follows that \(|\mathcal{O}_3|=(m-1)m^{k-2}\).

Since orbits in $\mathcal{O}_{1}$, $\mathcal{O}_{2}$ and $\mathcal{O}_{3}$ have size $1$, $2$ and $4$, respectively, and
\[
|\mathcal{O}_{1}|+2|\mathcal{O}_{2}|+4|\mathcal{O}_{3}|=(m-2)m^{k-2}+2\cdot\frac{(m-2)(m-3)m^{k-2}}{2}+4(m-1)m^{k-2}=m^k=|\Fun(\Gamma,\Delta)|,
\]
we conclude that $\mathcal{O}_{1}$, $\mathcal{O}_{2}$ and $\mathcal{O}_{3}$ form a partition of the orbits of $\langle x\rangle$. Thus \(x\) has
\[
\frac{(m-2)(m-3)m^{k-2}}{2}+(m-1)m^{k-2}=\left(\frac{m(m-3)}{2}+2\right)m^{k-2}
\]
cycles of even length. Since \(m\equiv 3\pmod{4}\), this is an even number, so \(x\) is an even permutation and \(x\in H\).
\end{proof}

Recall that the maximal subgroups in Theorem~\ref{onanscottthm}\,(e) are of the so-called \emph{diagonal type}.

\begin{lem}\label{PSL2}
Suppose that \(H=(T^2.(\Out(T)\times S_{2}))\cap A_n\) is a maximal subgroup of diagonal type in \(A_{n}\) such that \(n=|T|\) and \(T\) is any nonabelian simple group  except for those in {\rm(a)}--{\rm(d)}:
\begin{enumerate}[{\rm(a)}]
    \item \(T=\PSL_{3}(q)\) with prime power \(q\equiv-1\pmod{4}\);
    \item \(T=\PSU_{3}(q)\) with prime power \(q\equiv1\pmod{4}\);
    \item \(T=A_{7}\);
    \item \(T=M_{11}\).
\end{enumerate}
Let \(\pi\) be the projection of \(T^2.(\Out(T)\times S_{2})\) modulo \(T^2.\Out(T)\). Then \(\pi(H)=S_{2}\).
\end{lem}

\begin{proof}
Note that \(T^2.(\Out(T)\times S_{2})\) is a permutation group on the set $[T^2:D]$ of right cosets of $D$ in $T^2$, where \(D=\{(t,t)\mid t\in T\}\). Take $x=\sigma\tau\in T^2.(\Out(T)\times S_{2})$ with $\sigma=(1,1)\in T^2$ and \(\tau=(1,2)\in S_2\). Then for each \(D(t,1)\in [T^2:D]\),
\[
(D(t,1))^x=D(1,t)=D(t^{-1},1)\ \text{ and }\ (D(t,1))^{x^2}=D(t,1),
\]
and we note that \(x\) fixes \(D(t,1)\) if and only if \(t^{2}=1\).
Let $I$ be the number of involutions in $T$. Then \(x\) has \((|T|-I-1)/2\) cycles of length \(2\) and the number has the same parity as \((I+1)/2\).

If \(I\equiv 3\pmod{4}\), then \((I+1)/2\) is even and so $x\in A_n$, which implies that $x\in H$ and hence $\pi(H)=S_2$.
For the remainder of the proof, assume that \(I\not\equiv 3\pmod{4}\).
Then Lemma~\ref{marcel} asserts that \(T=\PSL_{2}(q)\), where \(q=p^f\equiv \varepsilon \pmod{8}\) with prime $p$ and $\varepsilon=\pm1$. Let $P=D.(\Out(T)\times S_2)=\Aut(T)\times S_{2}$.
Now $g\in\Aut(T)$ fixes a point $D(t,1)$ if and only if $D(t,1)^{g}=D(t^{g},1)=D(t,1)$, that is, if and only if $t\in\C_{T}(g)$.

Let $R\cong D_{2(q+\varepsilon)}$ be a dihedral subgroup of $\PGL_2(q)$ of order $2(q+\varepsilon)$. Then $R\cap\PSL_2(q)\cong D_{q+\varepsilon}$. Let $r$ be a generator of the cyclic subgroup of $R$ of order $q+\varepsilon$. Since $q\equiv\varepsilon\pmod{8}$, we have $\langle r\rangle=\langle r^2\rangle\times\langle r^{(q+\varepsilon)/2}\rangle$, and so $r^{(q+\varepsilon)/2}\notin\PSL_2(q)$. Clearly, $\C_{\PSL_2(q)}(r^{(q+\varepsilon)/2})=R\cap\PSL_2(q)$ of order $q+\varepsilon$. Then the action of $r^{(q+\varepsilon)/2}\in \Aut(T)$ on $[T^{2}:D]$ has exactly $q+\varepsilon$ fixed points. Then since $|r^{(q+\varepsilon)/2}|=2$, the element
\(r^{(q+\varepsilon)/2}\) has \((|\PSL_{2}(q)|-(q+\varepsilon))/2\) cycles of length \(2\) and this number is odd. So \(r^{(q+\varepsilon)/2}\notin H\).
Hence $(T^2.\Out(T))\cap A_n\cong T^2.C_f$ has index $2$ in $T^2.\Out(T)$, and so
\[
(T^2.(\Out(T)\times S_{2}))\cap A_n\cong T^2.(C_f\times S_{2}),
\]
which implies that \(\pi(H)=S_2\), completing the proof.
\end{proof}

\section{Homogeneous factorisations of wreath product}\label{4}

A group factorisation \(G=HK\) is said to be \emph{homogeneous} if \(H\cong K\). The following is a useful observation (see for instance~\cite[Lemma 3.2]{GLX2019}) in the study of homogeneous factorisations.

\begin{lem}\label{abp}
For any homogeneous factorisation \(G=AB\) we have \(\Pi(G)=\Pi(A)=\Pi(B)\) and \(|A|_{p}^{2}\geqslant |G|_{p}\) for any prime \(p\).
\end{lem}

This section is devoted to homogeneous factorisations of groups $G$ with \(A_{m}\wr A_{k}\leqslant G\leqslant S_{m}\wr S_{k}\). Our starting point is the following result from \cite[Lemma 3.5]{GLX2019}.

\begin{lem}\label{wr}
Let \(R\wr S_{k}\) be a wreath product with base group \(M=R_{1}\times\cdots\times R_{k}\), where \(R_{1}\cong\cdots\cong R_{k}\cong R\), and \(T\wr S_{k}\leqslant G\leqslant R\wr S_{k}\) such that \(T\leqslant R\). Suppose that \(G=AB\) is a homogeneous factorisation of \(G\) such that \(A\) is transitive on \(\{R_{1},\dots, R_{k}\}\). Then with \(\varphi_{i}(A\cap M)\) being the projection of \(A\cap M\) to \(R_{i}\), we have \(\varphi_{1}(A\cap M)\cong\cdots\cong \varphi_{k}(A\cap M)\) and \(\Pi(T)\subseteq\Pi(\varphi_{1}(A\cap M))\).
\end{lem}

In view of Lemmas~\ref{ab} and~\ref{wr}, we make the following hypothesis for convenience.

\begin{hyp}\label{gp}
Suppose that \(G=AB\) is a homogeneous factorisation such that \(A_{m}\wr A_{k}\leqslant G\leqslant S_{m}\wr S_{k}\) with \(m\geqslant 8\) and \(k\geqslant 2\). Let \(M=G\cap S_{m}^{k}\) and $\pi$ be the projection from \(G\) to \(G/M\). For $i\in\{1,\dots,k\}$ let \(\varphi_{i}\) be the  projections from \(M\) to the \(i\)-th component of $S_m^k$. Since \(\pi(A)\pi(B)=A_k\) or \(S_{k}\), we may assume without loss of generality that \(\pi(A)\) is transitive by Lemma~\ref{ab}. We also have \(\varphi_{1}(A\cap M)\cong\varphi_{2}(A\cap M)\cong\cdots\cong\varphi_{k}(A\cap M)\) and \(\Pi(A_{m})=\Pi(\varphi_{1}(A\cap M))\) by Lemma~\ref{wr}. Let \(q\) be the largest prime not greater than \(m\).
\end{hyp}

We derive necessary conditions for homogeneous factorisations $G=AB$ under Hypothesis \ref{gp}  in the remainder of this section, according to different values of $k$.

\begin{lem}\label{M}
Suppose that Hypothesis $\ref{gp}$ holds and $k\geqslant3$. Then $A$ has a minimal normal subgroup isomorphic to $A_\ell^k$ for some \(\ell\) with \(q\leqslant \ell\leqslant m\).
\end{lem}

\begin{proof}
For $i\in\{1,\dots,k\}$, let \(X_i=\varphi_{i}(A\cap M)\) and \(Y_i=\varphi_{i}(B\cap M)\). By Hypothesis \ref{gp}, we have \(\Pi(X_{i})=\Pi(A_{m})\). Then it follows from \cite[Theorem 4]{transitive} that \(A_{\ell}\trianglelefteq X_{i}\leqslant S_{\ell}\times S_{m-\ell}\) for some \(\ell\) with \(q\leqslant\ell\leqslant m\). Note that $q\geqslant7$ as $m\geqslant8$. Thus $\ell\geqslant7$, and so \(A_{\ell}\) is an insoluble composition factor of \(A\cap M\). Let \(f=\m_{A\cap M}(A_{\ell})\). We next prove $f=k$.

As Lemma~\ref{lem:bert} implies \(\ell\geqslant q>m/2\), we deduce from \(A_{\ell}\trianglelefteq X_{i}\leqslant S_{\ell}\times S_{m-\ell}\) that $X_i$ has a unique normal subgroup isomorphic to $A_\ell$ for all $i\in\{1,\dots,k\}$. Hence $A_\ell^k\cap A$ is a subdirect subgroup of $A_\ell^k$. Since \(\pi(A)\) is transitive, we obtain by Scott's Lemma (see \cite[Theorem 4.16]{csaba} and \cite[Corollary 4.17]{csaba}) that there is a block system of $\pi(A)$ on $\{1,\dots,k\}$ with $f$ blocks. Consequently, \(f\) divides \(k\), and \(A\) has a minimal normal subgroup \(W\cong A_{\ell}^{f}\).
Since \((m!)_{q}=(\ell!)_{q}=q\), any composition factor of \(A\cap M\) and \(X_{1}\) with order divisible by \(q\) is isomorphic to \(\A_{\ell}\), and \(W\) is the unique minimal normal subgroup of \(A\cap M\) with order divisible by \(q\).

If \(f\leqslant k/3\), then as $q\geqslant7$, we obtain, using Lemma \ref{p},
\[
|A|^{2}_{q}=|A\cap M|^{2}_{q}\cdot|\pi(A)|^{2}_{q}\leqslant|\varphi_{i}(A\cap M)|^{2f}_{q}\cdot(k!)^{2}_{q}<q^{2k/3}\cdot q^{2k/(q-1)}\leqslant q^{k}\leqslant |G|_q,
\]
contradicting Lemma \ref{abp}. Thus \(f=k/2\) or \(k\), as $f$ divides $k$. If $f=k$, then $W$ is a minimal normal subgroup of $A$ isomorphic to $A_\ell^k$ and the lemma holds. In particular, if $\pi(A)$ is primitive, then $f=k$, proving the lemma. Similarly, if $\pi(B)$ is primitive, then $B$ has a minimal normal subgroup isomorphic to $A_\ell^k$, and the lemma holds as $A\cong B$. To complete the proof, assume that neither $\pi(A)$ nor $\pi(B)$ is primitive and suppose for a contradiction that $f=k/2$. As a consequence, \(\pi(A)\) is an imprimitive subgroup of \(S_{k}\) with \(k/2\) blocks and therefore \(\pi(A)\leqslant S_{2}\wr S_{k/2}\) with $k\geqslant4$.

Since \(\pi(A)\pi(B)=\pi(G)\geqslant A_{k}\) and \(\pi(B)\) is not primitive, it follows from Theorem \ref{LPS} for $k\geqslant6$ and is easy to see for $k=4$ that \(A_{k-1}\leqslant \pi(B)\leqslant S_{k-1}\). Thus \(B\) has an orbit of length \(k-1\), say, \(\{Y_{1},\ldots, Y_{k-1}\}\), on \(\{Y_{1},\ldots,Y_{k}\}\). As a consequence, \(Y_{1}\cong\cdots\cong Y_{k-1}\).
Let \(p\geqslant5\) be a prime not exceeding \(m\). Then  \((m!)_{p}\geqslant p\) and
\begin{equation}\label{1}
        |B|_{p}=|B\cap M|_{p}\cdot|\pi(B)|_{p} \leqslant|Y_{1}|_{p}^{k-1}\cdot|Y_{k}|_{p}\cdot|S_{k}|_{p} \leqslant|Y_{1}|_{p}^{k-1}(m!)_{p}(k!)_{p}.
\end{equation}
By Lemma \ref{abp}, \(|B|_{p}^{2}\geqslant|G|_{p}\). This together with (\ref{1}) gives
\[
        (m!)_{p}^{k}(k!)_{p}=|G|_{p}\leqslant|B|_{p}^{2}\leqslant|Y_{1}|_{p}^{2(k-1)}(m!)_{p}^{2}(k!)_{p}^{2},
\]
        and so, using Lemma \ref{p},
\[
        |Y_{1}|_{p}^{2(k-1)}\geqslant(m!)^{k-2}_{p}/(k!)_{p}\geqslant p^{k-2}/(k!)_{p}>p^{k-2}p^{-k/(p-1)}\geqslant1
\]
as \(p\geqslant 5\) and \(k\geqslant3\). Hence \(|Y_{1}|_{p}>1\), which means that $|Y_1|$ is divisible by $p$.
Since \(m\geqslant 8\), the two largest primes not exceeding \(m\), say \(q\) and \(r\), are at least \(5\). Then \(|Y_{1}|\) is divisible by both \(q\) and \(r\). This together with \cite[Theorem 4 (ii)]{transitive} implies that \(A_{c}\trianglelefteq Y_{1}\leqslant S_{c}\times S_{m-c}\) for some \(q\leqslant c\leqslant m\). Let \(P=\prod_{i=1}^{k-1}\varphi_{i}(M)\) and \(d=\m_{B\cap P}(A_{c})\). Since \(\pi(B)\geqslant A_{k-1}\) acts primitively on \(\{ Y_{1},\ldots, Y_{k-1}\}\), we deduce by applying Scott's Lemma on the subgroup \(B\cap A_{c}^{k-1}\) of $A_c^{k-1}$ that either \(d=1\) or \(d=k-1\).

Suppose that \(d=1\). Then \(|B\cap P|_{q}=|A_{c}|_{q}=q\), and so
\[
|B|_{q}\leqslant |B\cap P|_{q}|Y_{k}|_{q}|\pi(B)|_{q}\leqslant q^{2}(k!)_{q}<q^2q^{k/(q-1)}\leqslant q^{(12+k)/6}.
\]
Moreover, Lemma \ref{abp} requires
\begin{equation}\nonumber
|B|_{q}^{2}\geqslant |G|_{q}=|M|_{q}(k!)_{q}=q^{k}(k!)_{q}\geqslant q^{k}.
\end{equation}
Hence $q^{(12+k)/3}>q^k$, and so $k=4$ as \(k\geqslant3\) is even.
Note that \(B\cap P\) has a unique normal subgroup \(K\) isomorphic to \(A_{c}\), see \cite[Corollary 4.17]{csaba}. Since \(B\cap P\) is normal in \(B\) and \(K\) is simple, we deduce that \(K\) is also minimal normal in \(B\). Since \(A\cong B\), it follows that \(A\) has a minimal normal subgroup \(H\cong A_{c}\). As \(\pi(A)\leqslant S_k=S_{4}\), we have \(H\leqslant A\cap M\). Notice that \(|H|_{q}=|A_{c}|_{q}>1\). This contradicts the fact that \(W\cong A_{\ell}^{f}=A_{\ell}^{k/2}\) is the unique minimal normal subgroup of \(A\cap M\) with order divisible by \(q\).

Thus we conclude that $d=k-1$.
Since \(\pi(B)\) is transitive on \(\{Y_{1},\ldots, Y_{k-1}\}\), Scott's Lemma implies that \(B\cap P\) has a unique minimal normal subgroup \(T\) isomorphic to \(A_{c}^{k-1}\). Moreover, \(B\cap P\) is normal in \(B\) as \(B\) fixes \(Y_{k}\). Thus \(T\) is a minimal normal subgroup of \(B\). Since \(A\cong B\), there exists a minimal normal subgroup \(L\) of \(A\) such that \(L\cong T\cong A_c^{k-1}\).
If \(L\cap(A\cap M)=1\), then \(L\lesssim\pi(A)\leqslant S_{k}\), which leads to
\[
q^{k-1}=|L|_{q}\leqslant|S_{k}|_{q}=(k!)_{q}<q^{k/(q-1)}\leqslant q^{k/6},
\]
a contradiction. Thus \(L\leqslant A\cap M\), and it follows that
\[
\{A_{c}\}=\CF(L)\subseteq\CF(A\cap M)=\CF(X_{1}).
\]
Recall that \(A_{\ell}\) is the only composition factor of \(X_{1}\) with order divisible by \(q\). We then conclude that \(c=\ell\). However, \(\m_{A\cap M}(A_{\ell})=k/2\) but \(\m_{A\cap M}(A_{c})\geqslant \m_{L}(A_{c})=k-1\), a contradiction. This completes the proof.
\end{proof}

\begin{lem}\label{k=2}
Suppose that Hypothesis $\ref{gp}$ holds and $k=2$. Then \(A_{\ell}\trianglelefteq \varphi_{i}(A\cap M)\leqslant S_{\ell}\times S_{m-\ell}\) for some \(\ell\) with \(q\leqslant\ell\leqslant m\). Moreover, if \(\ell<m\), then \(\m_{A\cap M}(A_{\ell})=2\) and $A$ has a minimal normal subgroup isomorphic to $A_\ell^2$.
\end{lem}

\begin{proof}
For $i\in\{1,2\}$, let \(X_i=\varphi_{i}(A\cap M)\) and \(Y_i=\varphi_{i}(B\cap M)\). By Hypothesis \ref{gp}, we have \(\Pi(X_{i})=\Pi(A_{m})\). Then it follows from \cite[Theorem 4]{transitive} that \(A_{\ell}\trianglelefteq X_{i}\leqslant S_{\ell}\times S_{m-\ell}\) for some \(\ell\) with \(q\leqslant\ell\leqslant m\). Note that $q\geqslant7$ as $m\geqslant8$. Thus $\ell\geqslant7$, and so \(A_{\ell}\) is an insoluble composition factor of \(A\cap M\). Let \(f=\m_{A\cap M}(A_{\ell})\). We next assume \(\ell<m\) and prove that $f>1$, which will then imply the conclusion in the ``Moreover'' part of the lemma. Suppose for a contradiction that $f=1$.
If \(\ell=m-1\), then \(|A|\leqslant 2^{2}|A_{m-1}|\) and so $|A|^{2}\leqslant2^2((m-1)!)^{2}<(m!)^{2}\leqslant|G|$, contradicting Lemma~\ref{abp} since \(G=AB\) is a homogeneous factorisation. Thus we have \(\ell\leqslant m-2\).

We first prove \(\pi(B)=C_{2}\).
Suppose for a contradiction that \(\pi(B)=1\). Then \(B\leqslant M\) and therefore we derive from $G=AB$ that \(M=(A\cap M)B\). Hence
\(\varphi_{i}(M)=\varphi_{i}(A\cap M)\varphi_{i}(B)\) for \(i\in\{1,2\}\). Note that \(A_{\ell}\leqslant \varphi_{i}(A\cap M)\leqslant S_{\ell}\times S_{m-\ell}\). By checking the factorisations of alternating and symmetric groups given in Theorem~\ref{LPS}, we conclude that either \(A_{m}\leqslant \varphi_{i}(B)\), or \(\varphi_{i}(B)\) is \((m-\ell)\)-homogeneous with \(1\leqslant m-\ell\leqslant 5\). If \(A_{m}\leqslant \varphi_{i}(B)\) for some $i\in\{1,2\}$, then
\[
A_{m}\in \ICF(B)=\ICF(A)=\ICF(A\cap M)\subseteq\ICF(X_1)\cup\ICF(X_2),
\]
contradicting the condition that \(A_{\ell}\leqslant X_j\leqslant S_{\ell}\times S_{m-\ell}\) for $j\in\{1,2\}$.
Therefore, \(Y_i=\varphi_{i}(B)\) is \((m-\ell)\)-homogeneous for \(i\in\{1,2\}\). Since
\[
A_{\ell}\in \ICF(A)=\ICF(B)=\ICF(Y_1)\cup\ICF(Y_2),
\]
we deduce that \(A_{\ell}\in\ICF(Y_1)\) or \(\ICF(Y_2)\). However, by Lemma \ref{LPS}, this is impossible as both \(Y_1\) and \(Y_2\) are \((m-\ell)\)-homogeneous. Thus \(\pi(B)=C_{2}\), as claimed.

From \(\pi(B)=\C_{2}\) we conclude that \(Y_{1}\cong Y_{2}\). Then Lemma~\ref{wr} implies that \(\Pi(Y_1)=\Pi(A_{m})\), and it follows from~\cite[Theorem~4]{transitive} that \(A_{t}\trianglelefteq Y_1\leqslant S_{t}\times S_{m-t}\) for some \(t\) with \(q\leqslant t\leqslant m\). As a consequence, \(A_{t}\in\ICF(B)=\ICF(A)=\ICF(A\cap M)\). Since \(A_{\ell}\) is the unique insoluble composition factor of \(A\cap M\) with order divisible by \(q\), we conclude that \(t=\ell\). Now \(A_\ell\trianglelefteq X_{i}\leqslant S_\ell\times S_{m-\ell}\) and \(A_\ell\trianglelefteq Y_{i}\leqslant S_\ell\times S_{m-\ell}\) for \(i\in\{1,2\}\). By Lemma~\ref{conju}, we have \(H'\trianglelefteq X_{i}\leqslant H\times K\) and \((H^g)'\trianglelefteq Y_{i}\leqslant H^g\times K^g\) for \(i\in\{1,2\}\), where $H=S_\ell$ is the pointwise stabiliser of an $(m-\ell)$-subset in $S_m$, $K=S_{m-\ell}$ is the centraliser of $H$ in $S_m$, and $g$ is some element in $A_m$. Let $P=H\times H<S_m\times S_m$.
Since $\pi(A)=C_2$ while \(\m_{A\cap M}(A_{\ell})=1\), there exists \(\sigma\in\Aut(H')\) such that \(A\cap P'=\{(x, x^{\sigma}) \mid x\in H'\}\).
Moreover, since $\Aut(H')\cong S_\ell\cong H$ (as \(\ell>6\)), we may view $x^\sigma$ as the conjugate of \(x\) by $\sigma\in H$.
Similarly, there exists \(\tau\in H^g\) such that \(B\cap P'=\{(y, y^{\tau}) \mid y\in (H^g)'\}\).

Since \(\ell\leqslant m-2\), there exists some odd permutation $\gamma\in K=S_{m-\ell}$. If $\sigma^{-1}g\tau\in A_m$, then take $\delta=(g,\sigma^{-1}g\tau)\in A_m\times A_m$. If $\sigma^{-1}g\tau\in S_m\setminus A_m$, then take $\delta=(g,\gamma\sigma^{-1}g\tau)\in A_m\times A_m$. It follows that
\[
(A\cap P')^\delta=\{(x,x^\sigma)\mid x\in H'\}^\delta=\{(x^g,(x^g)^\tau)\mid x\in H'\}=\{(y, y^\tau)\mid y\in (H^g)'\}=B\cap P'.
\]
Since \(A\cap P'\) and \(B\cap P'\) are normal in \(A\) and \(B\), respectively, this implies that
\[
G=AB=\N_{G}(A\cap P')\N_{G}(B\cap P')=\N_{G}(A\cap P')\N_{G}((A\cap P')^\delta)=\N_{G}(A\cap P')(\N_{G}(A\cap P'))^\delta,
\]
which contradicts Lemma~\ref{hab1} as $A\cap P'\cong A_\ell$ is not normal in $G$.
\end{proof}

\begin{lem}\label{diagonal}
Suppose that Hypothesis $\ref{gp}$ holds and $k=2$. If \(\Soc(A\cap M)=\{(x,x^\sigma)\mid x\in A_{m}\}\) for some \(\sigma\in \Aut(A_{m})\), then \(\Soc(B\cap M)\neq\{(y,y^\gamma)\mid y\in A_{m}\}\) for any \(\gamma \in \Aut(A_{m})\).
\end{lem}

\begin{proof}
By Hypothesis \ref{gp}, \(m\geqslant8\). The conclusion for $m\in\{8,9,10\}$ can be directly verified by computation in \textsc{Magma}~\cite{BCP1997}. For the rest of the proof we assume \(m\geqslant11\).
Suppose for a contradiction that \(\Soc(B\cap M)=\{(y,y^\gamma)\mid y\in A_{m}\}\) for some \(\gamma \in \Aut(A_{m})\).
Note that \(|A|=|\Soc(A\cap M)|\delta=|A_{m}|\delta\) where \(\delta=2^{i}\) with \(1\leqslant i\leqslant 3\). It follows that
\[
|A_{m}|^{2}\delta\leqslant |G|=\frac{|A||B|}{|A\cap B|}=\frac{|A_{m}|^{2}\delta^{2}}{|A\cap B|},
\]
and so \(|A\cap B|\leqslant\delta\leqslant 8\). Then since
\[
A\cap B\cap M'=(A\cap M')\cap(B\cap M')=\{(x,x^\sigma)\mid x\in A_{m},\,x^\sigma=x^\gamma\}
\]
has the same cardinality as $\{x\in A_{m}\mid x^{\sigma\gamma^{-1}}=x\}=\C_{A_{m}}(\sigma\gamma^{-1})$, we deduce that \(|\C_{A_{m}}(\sigma\gamma^{-1})|\leqslant|A\cap B|\leqslant8\).
In the following we show however that, \(|\C_{A_{m}}(x)|>8\) for all \(x\in A_{m}\) when \(m\geqslant11\). This will complete the proof.

For \(11\leqslant m\leqslant 16\) the conclusion \(|\C_{A_{m}}(x)|>8\) can be verified by  \textsc{Magma}~\cite{BCP1997} directly. For \(m\geqslant17\), we prove the conclusion by induction. Suppose that the result is true for \(m-1\). Then for \(x\in A_{m}\), write \(x=\alpha_{1}\cdots\alpha_{s}\), where \(\alpha_{i}\) are disjoint cycles of size \(r_{i}\). If there exists some \(i\) with \(r_{i}=1\), then \(x\in A_{m-1}\) and by induction, \(|\C_{A_{m}}(x)|\geqslant|\C_{A_{m-1}}(x)|>8\). Now assume \(r_{i}\geqslant 2\) for all \(i\). Then
\[
2|\C_{A_{m}}(x)|\geqslant|\C_{S_{m}}(x)|\geqslant|\alpha_{1}|\cdots|\alpha_{s}|=r_{1}\cdots r_{s}\geqslant \sum_{i=1}^{s}r_{i}=m\geqslant 17,
\]
and so \(|\C_{A_{m}}(x)|\geqslant m/2>8\), as required.
\end{proof}

\section{Proof of the main theorem}\label{3}

The following lemma provides necessary information to address the excluded candidates for $T$ in Cases~(a)--(d) of Lemma~\ref{PSL2}.

\begin{lem}\label{T}
Let \(G=(T_{1}\times T_{2}).\Out(T)\), where \(T_{1}\cong T_{2}\cong T\) and \(T\) is one of \(A_{7}\), \(M_{11}\), \(\PSL_{3}(q)\) with \(q\equiv -1\pmod{4}\) or \(\PSU_{3}(q)\) with \(q\equiv 1\pmod{4}\). Suppose that \(H\leqslant G\) with \(|H|_{r}\geqslant |G|_{r}^{2/3}\) for all \(r\in\Pi(G)\). Then \(T_{1}\times T_{2}\leqslant H\).
\end{lem}

\begin{proof}
For \(T=A_{7}\), \(M_{11}\), \(\PSL_{3}(3)\) or \(\PSU_{3}(5)\), calculation in \textsc{Magma}~\cite{BCP1997} directly verifies the result.
In what follows, we assume that \(T\) is \(\PSL_{3}(q)\) with \(3<q \equiv -1\pmod{4}\) or \(\PSU_{3}(q)\) with \(5<q \equiv 1\pmod{4}\). Let $q=p^f$ with prime $p$, and let $e=1$ or $2$ according to $T=\PSL_3(q)$ or $\PSU_3(q)$, respectively. Then \(p^{3ef}-1\) has a primitive prime divisor $s$, such that $s$ divides $|T|$ but not \(|\Out(T)|\). If $p=3$, then \(p^{2f}-1\) has a primitive prime divisor \(t\), such that $|\Out(T)|_t=1<|T|_t$. If $p\geqslant 5$, then by letting $t=p$, we have \(|\Out(T)|_t=f_t\leqslant f<p^f<|T|_{t}\). In either case, we have \(|\Out(T)|_{r}<|T|_{r}\) for each $r\in\{s,t\}$. It follows that, for $i\in\{1,2\}$,
\[
|H\cap T_{i}|_{r}\cdot|T|_{r}\cdot|\Out(T)|_{r}\geqslant|H|_{r}\geqslant|G|_{r}^{2/3}=|T|_r^{4/3}|\cdot|\Out(T)|_r^{2/3},
\]
and so $|H\cap T_{i}|_{r}\geqslant|T|_r^{1/3}|\Out(T)|_r^{-1/3}>1$. Hence we conclude from \cite[Table~10.3]{transitive} that \(H\cap T_{i}=T_{i}\) for $i\in\{1,2\}$, which means \(T_{1}\times T_{2}\leqslant H\), as required.
\end{proof}

The next lemma proves part~(a) of Theorem~\ref{thmm} under the assumption that $G_v$ is a maximal subgroup of $G$ of type~(f) in Theorem~\ref{onanscottthm}.

\begin{lem}\label{AS}
Let $\Gamma$ be a $G$-vertex-primitive $(G,s)$-arc-transitive digraph such that $s\geqslant2$ and $G$ is almost simple with socle $A_n$, and let \(v\) be a vertex of $\Gamma$. Suppose that $T\trianglelefteq G_v\leqslant\Aut(T)$ for some nonabelian simple group $T$. Then $T=\Sp_4(2^f)$ or $\mathrm{P\Omega}_8^+(q)$, where $f\geqslant2$ and $q$ is a prime power.
\end{lem}

\begin{proof}
Let $(u,v,w)$ be a $2$-arc of $\Gamma$. According to~\cite[Lemma~2.2]{GX2018}, there exists $g\in G$ such that $G=\langle G_v,g\rangle$ and $G_v=G_{uv}G_{vw}$ with $G_{uv}^g=G_{vw}$. Thus, by~\cite[Proposition~3.3]{GLX2019}, one of the following appears:
\begin{enumerate}[{\rm(i)}]
    \item both $G_{uv}$ and $G_{vw}$ contain $T$;
    \item \(T=A_6\) and $\Soc(G_{uv})=A_5$, or $T=M_{12}$ and $\Soc(G_{uv})=M_{11}$;
    \item $T=\Sp_4(2^f)$ or $\mathrm{P\Omega}_8^+(q)$, where $f\geqslant2$ and $q$ is a prime power.
\end{enumerate}
If~(i) appears, then $T^g=\Soc(G_{uv})^g=\Soc(G_{uv}^g)=\Soc(G_{vw})=T$, and so $T\trianglelefteq\langle G_v,g\rangle=G$, a contradiction.  Next suppose that (ii) holds.   Since $G_v$ is maximal in $G$, it is a primitive permutation group of degree $n$ and so $n$ is the index of a core-free maximal subgroup of $G_v$. When $G_v=A_6$ or $S_6$  this implies that $n=10$ or 15, but a  \textsc{Magma}~\cite{BCP1997}  calculation shows that in all cases $G_v$ is not maximal in $A_n$ or $S_n$. Thus $(G,G_v,G_{uv})=(A_n,M_{12},M_{11})$.  However, a computation in  \textsc{Magma}~\cite{BCP1997}  shows that for all the values of $n$ where two nonconjugate subgroups of $G_v$ isomorphic to $M_{11}$ are conjugate in $A_n$, we have that $\mathrm{N}_{A_n}(M_{12})=\Aut(M_{12})$, contradicting $G_{v}$ being maximal in $G$.
Hence~(iii) holds, as the lemma states.
\end{proof}

We are now ready to prove Theorem~\ref{thmm}.

\begin{proof}[Proof of Theorem~$\ref{thmm}$]
Suppose that the assumptions of Theorem~$\ref{thmm}$ hold, and in particular that $s\geq2$.  Let $(u,v,w)$ be a $2$-arc of $\Gamma$. Then there exists $g\in G$ such that $(u^g,v^g)=(v,w)$, and there is a homogeneous factorisation $G_v=G_{uv}G_{vw}$ by~\cite[Lemma 2.2]{GX2018}. We first fix the gap in the proof of Theorem~\ref{PWY}. As mentioned in Remark~\ref{rem}, the gap occurs when $G=A_n$ and $G_v$ is a maximal subgroup of $G$ in Cases~(b),~(c) and~(e) of Theorem~\ref{onanscottthm}. By Lemmas~\ref{imprimitive},~\ref{PA} and~\ref{PSL2}, this possibly happens only if $G_v=T^2.\Out(T)$ is the holomorph of some simple group $T$ in Cases~(a)--(d) of Lemma~\ref{PSL2}. We prove for these cases that $s\leqslant2$ still holds, as Theorem~\ref{PWY} states. Suppose for a contradiction that $s\geqslant3$. Then since the valency of $\Gamma$ is $|G_v|/|G_{uv}|=|G_v|/|G_{vw}|$, it follows that $|G_v|$ is divisible by $(|G_v|/|G_{uv}|)^3=(|G_v|/|G_{vw}|)^3$. Hence \(|G_{uv}|_{r}=|G_{vw}|_{r}\geqslant |G_v|_{r}^{2/3}\) for all \(r\in\Pi(G_v)\), and then by Lemma~\ref{T}, both $\Soc(G_{uv})$ and $\Soc(G_{vw})$ contain $T^2$. As a consequence, $\Soc(G_{uv})=\Soc(G_v)=\Soc(G_{vw})$, and so
\[
\Soc(G_v)^g=\Soc(G_{uv})^g=\Soc(G_{uv}^g)=\Soc(G_{vw})=\Soc(G_v),
\]
contradicting~\cite[Lemma~2.14]{GLX2019}. This fixes the gap in the proof of Theorem~\ref{PWY}.

We will next prove that \(s=2\). By Theorem~\ref{PWY}, we need to show that for the groups in Theorem~\ref{PWY}(2) the value of $s$ is $2$. So assume that, for some integers $m\geq 8$ and $k\geq2$ such that $n=mk, m^k$ or $(m!/2)^{k-1}$, we have
\[
(A_m\wr S_k)\cap G\leqslant G_v\leqslant (S_m\wr S_k)\cap G.
\]
In particular $\Soc(G_v)=A_m^k$.

\medskip\noindent
{\it Claim:\ For these groups, $s=2$ and also \(k=2\) and \(\m_{G_{uv}\cap M}(A_{m})=1\).}

Let \(q\) be the largest prime not exceeding $m$.
By Lemmas~\ref{M} and~\ref{k=2}, there is a composition factor $A_\ell$ of $G_{uv}\cap\Soc(G_v)$ with $q\leqslant\ell\leqslant m$ such that, if \((k,\ell)\neq (2,m)\), then $G_{uv}$ has a minimal normal subgroup isomorphic to $A_\ell^k$. Let \(M=\prod_{i=1}^{k}M_{i}\cong S_{m}^{k}\) be the base group of $S_m\wr S_k$, where $M_i=S_m$ for $i\in\{1,\dots,k\}$, and let \(N_{i}=\Soc(M_{i})\cong A_{m}\) for \(i\in\{1,\dots,k\}\), and \(N=\prod_{i=1}^{k}N_{i}\).

Suppose that $R_{1}$ is a minimal normal subgroup of \(G_{uv}\), isomorphic to $A_\ell^k$. It follows that \(R_{2}:=R_{1}^{g}\) is a minimal normal subgroup of \(G_{vw}\). If \(R_{i}\cap M=1\) for some $i\in\{1,2\}$, then \(\A_{\ell}^{k}\cong R_i\lesssim S_{k}\), and so
\[
q^{k}\leqslant|\A_{\ell}|^{k}_{q}\leqslant |S_{k}|_{q}=(k!)_{q}<q^{\frac{k}{q-1}},
\]
which is impossible. Therefore, \(R_{i}\leqslant M\) for each $i\in\{1,2\}$. If \(\ell=m\), then since \(R_{1}=\Soc(M)\) is the unique subgroup of \(M\) isomorphic to \(A_{m}^{k}\), we deduce that \(R_{1}^{g}=R_{2}=R_{1}\), contradicting~\cite[Lemma~2.14]{GLX2019}. Thus we have \(\ell<m\).
For $i\in\{1,2\}$, we deduce from $N_j\unlhd M$ and $R_i\leqslant M$ that $R_i\cap N_j\unlhd R_i$.
Since $R_i\cong A_\ell^k$, it follows that $R_i\cap N_j=A_\ell^t$ for some $t\in\{0,1,\dots,k\}$.
Accordingly, $q^t\leqslant|A_\ell^t|_q=|R_i\cap N_j|_q\leqslant|N_j|_q=|A_m|_q=q$, that is, $t\leqslant1$.
If $t=0$, then $R_i\cong R_iN_j/N_j\leqslant N/N_j\cong A_m^{k-1}$, and so $q^k\leqslant|A_\ell^k|_q=|R_i|_q\leqslant|A_m^{k-1}|_q=q^{k-1}$, a contradiction.
Thus $R_i\cap N_j=A_\ell$ for each $j\in\{1,\dots,k\}$, which implies that
\[
R_i=(R_i\cap N_1)\times(R_i\cap N_2)\times\cdots\times(R_i\cap N_k).
\]
For each $j\in\{1,\dots,k\}$, by Lemma~\ref{conju}, there exists $g_j\in N_j$ such that $(R_1\cap N_j)^{g_j}=R_2\cap N_j$.
Let $g=g_1\cdots g_k$. Then $g\in N\leqslant G_v$ such that $R_1^g=R_2$. Since $R_1\unlhd G_{uv}$, it follows that $R_2\unlhd G_{uv}^g$.
This together with $R_2\unlhd G_{vw}$ leads to $R_2\unlhd\langle G_{uv}^g,G_{vw}\rangle$.
However, by \cite[Lemma 2.11]{GLX2019} and Lemma~\ref{hab1} we have $G_v=G_{uv}G_{vw}=G_{uv}^gG_{vw}$.
Hence $A_\ell^k\cong R_2\unlhd\langle G_{uv}^g,G_{vw}\rangle=G_v$, contradicting $\Soc(G_v)=A_m^k$.

Thus we conclude that $G_{uv}$ has no minimal normal subgroup isomorphic to $A_\ell^k$. Consequently, \((k,\ell)=(2,m)\) and \(\m_{G_{uv}\cap M}(A_{m})=1\). This implies that \(|G_{uv}|_q=|\A_{m}|_q=q\). Since \(|G_{v}|_q=|A_{m}|_q^{2}=q^{2}\), we deduce that the valency of $\Gamma$ has $q$-part $|G_{v}|_q/|G_{uv}|_q=q^2/q=q$.
If \(s\geqslant3\), then \(|G_{v}|_{q}\geqslant q^{3}\), a contradiction. Therefore, \(s=2\) and the Claim is proved.

It follows from \cite[Theorem~1.1~and~Lemmas~3.2--3.3]{PWY2020} that either \(T\trianglelefteq G_{v}\leqslant \Aut(T)\) for some nonabelian simple group \(T\), or \(G_{v}=(T^{k}.(\Out(T)\times S_{k}))\cap G\) with \(n=|T|^{k-1}\) for some almost simple group \(T\) and integer \(k\geqslant2\),  or part (2) of Theorem~$\ref{PWY}$ holds.
Further, by Lemma~\ref{AS}, if $G_v$ is almost simple then part (a) of Theorem~\ref{thmm} holds. Thus to complete the proof of Theorem \ref{thmm}, it remains to show that, if  Theorem~$\ref{PWY}$ (2) holds, then $k=2$ and  \(n=(m!/2)^{k-1}\) so  that  \(G_{v}\) is a maximal subgroup of diagonal type of \(G\).

Therefore we may assume that \((A_{m}\wr S_{k})\cap G\leqslant G_{v}\leqslant(S_{m}\wr S_{k})\cap G\)
 with \(m\geqslant8, k>1\) and \(n=mk, m^{k}\) or \((m!/2)^{k-1}\).
 From the Claim above, for these groups we must have $k=2$, and  \(\m_{G_{uv}\cap M}(A_{m})=1\).

Let $\pi$ be the projection from \(G_v\) to \(G_v/M\), and let \(\varphi_{i}\) be the  projections from \(M\) to $M_i$ for $i\in\{1,2\}$.
Since \(\pi(G_{uv})\pi(G_{vw})=\pi(G_{v})=S_{2}\), this implies that at least one of \(\pi(G_{uv})\) or \(\pi(G_{vw})\) is \(S_{2}\). Without loss of generality, assume  \(\pi(G_{uv})=S_{2}\). Then \(\varphi_{1}(G_{uv}\cap M)\cong\varphi_{2}(G_{uv}\cap M)\). This together with the fact that \(\m_{G_{uv}\cap M}(A_{m})=1\) implies that \[G_{uv}^{(\infty)}=\Soc(G_{uv}\cap M)=\{(x,x^\sigma)\mid x\in A_{m}\}\] for some \(\sigma\in \Aut(A_{m})\). We then deduce from Lemma \ref{diagonal} that \(G_{vw}^{(\infty)}=N_{i}\) for some $i\in\{1,2\}$. Observe that \((G_{uv}^{(\infty)})^{g}=G_{vw}^{(\infty)}\) as \(G_{uv}^{g}=G_{vw}\).
If \(G_{v}\) is an imprimitive maximal subgroup of \(G\), then \(n=2m\) and \(G_{uv}^{(\infty)}\) has no fixed points on \(\{1,\ldots,2m\}\) but \(G_{vw}^{(\infty)}=N_i\) fixes $m$ points, and we have a contradiction.
Suppose next that \(G_{v}\) is a maximal subgroup of \(G\) in product action. Then \(n=m^{2}\) and \(G_{uv}^{(\infty)}\) is conjugate to \(D=\{(x,x)\mid x\in A_{m}\}\) in \(S_m\wr S_2<S_n\). Note that \(D\) has an orbit of length \(m(m-1)\), while the orbits of \(N_i\) are all of length \(m\).
Thus \(G_{uv}^{(\infty)}\) and \(G_{vw}^{(\infty)}\) are not conjugate in \(S_{n}\), and again we have a contradiction.
Consequently, \(G_{v}\) is a maximal subgroup of \(G\) of diagonal type, completing the proof.
\end{proof}

\end{document}